\newtheorem{theorem}{Theorem}[section]
\begin{document}

\title{Summation of $p$-Adic Functional Series in Integer Points}

\author[affil1]{Branko Dragovich}
\ead{dragovich@ipb.ac.rs}
\author[affil2]{Andrei Yu. Khrennikov}
\ead{Andrei.Khrennikov@lnu.se}
\author[affil3]{Nata\v sa \v Z. Mi\v si\'c}
\ead{nmisic@afrodita.rcub.bg.ac.rs}
\address[affil1]{Institute of Physics, University of Belgrade,  Belgrade, Serbia, and \\ Mathematical Institute of the Serbian Academy of Sciences and Arts, Belgrade, Serbia}
\address[affil2]{International Center for Mathematical Modeling
in Physics, Engineering, Economics, and Cognitive Science\\
Linnaeus University, V\"axj\"o-Kalmar, Sweden}
\address[affil3]{Lola Institute, Kneza Vi\v seslava 70a, Belgrade, Serbia}
 \newcommand{\AuthorNames}{B. Dragovich et al.}

\newcommand{\FilMSC}{Primary 11E95, 40A05; Secondary 11D88, 11B83}
\newcommand{\FilKeywords}{($p$-adic series, $p$-adic invariant summation, integer sequences, Bell numbers)}
\newcommand{\FilCommunicated}{(name of the Editor, mandatory)}

\begin{abstract}
Summation of a large class of the functional series, which terms contain factorials, is considered.
We first investigated finite partial sums for integer arguments. These sums have the same values in
 real and all $p$-adic cases. The corresponding infinite  functional series are divergent in the real case,
 but they are convergent and have $p$-adic invariant sums in $p$-adic cases. We found polynomials which generate all
significant ingredients of these series and make connection between their real and $p$-adic properties. In particular,
we found connection of one of our integer sequences with the Bell numbers.
\end{abstract}

\maketitle

\makeatletter
\renewcommand\@makefnmark%
{\mbox{\textsuperscript{\normalfont\@thefnmark)}}}
\makeatother

\section{Introduction}

The infinite series play an important role in mathematics, physics and many other applications. Usually their numerical
ingredients  are rational numbers and therefore the series can be treated in any  $p$-adic as well as in real
number field, because rational numbers are endowed by real and $p$-adic norms simultaneously. Hence, for
a real divergent series  it may be useful  investigation of its $p$-adic analog  when  $p$-adic sum is a rational number for a rational argument.


Many series in string theory, quantum field theory, classical and quantum mechanics contain factorials. Such series are usually divergent in
the real case and convergent in $p$-adic ones. This was main motivation for considering different $p$-adic aspects of the series with factorials
 in \cite{bd1,bd2,bd3,bd4,bd5,bd6,bd7,bd8,bd9,bd10,bd11} and many summations  performed  in rational points. Also, using $p$-adic number
field invariant summation in rational points, rational summation \cite{bd5} and adelic summation \cite{bd2} were introduced.

It is worth mentioning that $p$-adic numbers and $p$-adic analysis have been successfully applied in modern mathematical physics (from strings to complex systems
and the universe as a whole) and in some related fields (in particular in bioinformation systems, see, e.g. \cite{bd13}), see \cite{freund,vvz} for an early review and \cite{bd12} for a recent one.  Quantum models with $p$-adic valued wave functions, see, e.g.  \cite{AK1} for
the recent review, generated various $p$-adic series leading to nontrivial summation problems (see, e.g. \cite{AK2,AK3,AK4}).

In this paper we consider $p$-adic invariant summation of a wide class of finite and infinite functional series which terms contain factorials, i.e.
$\sum \varepsilon^n (n + \nu)! P_{k\alpha} (n; x) x^{\alpha n + \beta} ,$ where $\varepsilon = \pm 1, $ and parameters $  \nu, \beta \in \mathbb{N}_0 = \mathbb{N}\bigcup \{ 0\} , \,\,
k,\alpha \in \mathbb{N} .$ $ \,\, P_{k\alpha} (n; x)$ are polynomials in $x$ of degree $k\alpha$  which coefficients are some polynomials in $n$.
 We  show that there exist polynomials $P_{k\alpha} (n; x)$ for any degree $k\alpha,$ such that for any $x \in \mathbb{Z}$  values of the sums do not depend on $p .$  Moreover, we have found
recurrence relations  to calculate such  $P_{k\alpha} (n; x)$ and other relevant polynomials.  The obtained results are generalization of recently
obtained ones for the series $\sum n! P_k (n; x) x^{n} ,$ see \cite{bd14}.  Some results are illustrated by simple examples.

 All necessary general information on $p$-adic series can be found in standard books on $p$-adic analysis, see, e.g. \cite{schikhof}.



\section{Some Functional Series with Factorials}

We consider  functional series of the form \begin{align}
S_{k\alpha} (x) = \sum_{n = 0}^{+\infty} \varepsilon^n \, (n + \nu)! \, P_{k\alpha} (n; x) \, x^{\alpha n + \beta} \,, \quad \varepsilon = \pm 1\,, \,\,  \nu, \beta \in \mathbb{N}_0 = \mathbb{N} \cup \{ 0\} \,, \,\,
\alpha,\, k \in \mathbb{N}\,, \label{2.1}
\end{align}
where
\begin{align} \label{2.2}
& P_{k\alpha} (n;x) = C_{k\alpha} (n)\, x^{k\alpha} + C_{(k-1)\alpha} (n)\, x^{(k-1)\alpha} + \dots + C_\alpha(n)\, x^\alpha + C_0(n)\,,   \nonumber \\
& C_{i\alpha}(n) = \sum_{j=0}^i c_{ij}\, n^{j\alpha} \,, \,\, \, 0 \leq i \leq k \,, \, \,  c_{ij} \in \mathbb{Z} .
\end{align}

Since rational numbers belong to real as well as to $p$-adic numbers, the series \eqref{2.1} can be considered as real ($x \in \mathbb{R}$) as $p$-adic
($x \in \mathbb{Q}_p$) ones. In the real case, \eqref{2.1} is evidently divergent. In the sequel we shall investigate \eqref{2.1} $p$-adically.

\subsection{Convergence of the $p$-Adic Series}

Necessary and sufficient condition for the $p$-adic power series
to be convergent \cite{schikhof,vvz} coincides, i.e. 
\begin{align}
S(x) = \sum_{n=1}^{+\infty} a_n x^n ,   \quad a_n \in \mathbb{Q} \subset \mathbb{Q}_p ,
\quad x \in \mathbb{Q}_p ,  \quad |a_n x^n|_p \to 0 \,\, \text{as} \,\, n
\to \infty ,     \label{2.3}
\end{align}
where $|\cdot|_p$ denotes $p$-adic absolute value (also called $p$-adic norm). To prove this assertion,
 note that $p$-adic absolute value is ultrametric (non-Archimedean) one and satisfies  inequality
$ |x + y|_p \leq \text{max} \{|x|_p, |y|_p\}.$  Now suppose that the series \eqref{2.3} is convergent for some arguments $x$ and the corresponding sum is $S(x) ,$ i.e $|S(x) - S_n (x)|_p \to 0 \,\, \text{as} \,\, n \to \infty ,$ where $S_n (x) = a_0 + a_1 x + ...+ a_{n-1} x^{n-1} .$ Then $|a_n x^n|_p = |S_{n+1}(x) - S_n (x)|_p = |S_{n+1}(x) - S(x)+ S(x) - S_n (x)|_p \leq \text{max} \{|S(x) - S_{n+1}(x)|_p \,, |S(x) - S_{n}(x)|_p\} \to 0 \,\, \text{as} \,\, n \to \infty .$
That $|a_n x^n|_p \to 0 \,\, \text{as} \,\, n \to \infty $ is sufficient condition  follows from the Cauchy criterion. Namely, for enough large $n$ and arbitrary $m,$ due to ultrametricity one can write  $|a_n x^n|_p  =|a_n x^n + a_{n+1} x^{n+1} + \cdots + a_{n+m} x^{n+m}|_p \,.$

 The functional series \eqref{2.1} contains $(n+\nu)!$, hence to investigate its convergence one has to know $p$-adic norm of $(n+\nu)! \,.$
 First, one has to know a power $M(n)$ by which  prime $p$ is contained in $n!$  (see, e.g. \cite{vvz} or \cite{bd14}). Let $n = n_0 + n_1 p +
... + n_r p^r $ and $ s_n = n_0 + n_1 + ... + n_r $ denotes the sum of digits in expansion of a natural number $n$ in base $p$.
 Then, one has
 \begin{align} &n! = m\, p^{M(n)} = m\,  p^{\frac{n -s_n}{p-1}} ,\quad p \nmid  m ,  \quad  |n!|_p = p^{-\frac{n -s_n}{p-1}} \,, \nonumber  \\
 &|(n+\nu)!|_p = p^{-\frac{n+\nu -s_{n+\nu}}{p-1}} . \label{2.4} \end{align}

\begin{theorem}
$p$-Adic series \eqref{2.1} is convergent for every $x \in \mathbb{Z}_p$ and any $p .$
\end{theorem}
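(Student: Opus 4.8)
The plan is to reduce convergence of the functional series \eqref{2.1} to the elementary criterion \eqref{2.3}, so it suffices to show that the general term $a_n := \varepsilon^n (n+\nu)!\, P_{k\alpha}(n;x)\, x^{\alpha n + \beta}$ satisfies $|a_n|_p \to 0$ as $n \to \infty$ for every $x \in \mathbb{Z}_p$. Since $|\varepsilon^n|_p = 1$ and $|x|_p \le 1$ for $x \in \mathbb{Z}_p$, we have $|x^{\alpha n + \beta}|_p \le 1$, so $|a_n|_p \le |(n+\nu)!|_p \cdot |P_{k\alpha}(n;x)|_p$. The first factor is controlled by \eqref{2.4}, namely $|(n+\nu)!|_p = p^{-\frac{n+\nu - s_{n+\nu}}{p-1}}$, which tends to $0$ because $s_{n+\nu}$ grows only logarithmically in $n$ while $n+\nu$ grows linearly; more precisely $\frac{n+\nu - s_{n+\nu}}{p-1} \to +\infty$. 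So the only thing to rule out is that $|P_{k\alpha}(n;x)|_p$ blows up fast enough to cancel this decay.

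The key step is therefore to bound $|P_{k\alpha}(n;x)|_p$ from above by a quantity that grows at most polynomially (in fact at most geometrically) in $n$, so that the factorial decay wins. From \eqref{2.2}, $P_{k\alpha}(n;x)$ is a $\mathbb{Z}$-linear combination of terms $c_{ij}\, n^{j\alpha} x^{i\alpha}$ with $0 \le j \le i \le k$. For $x \in \mathbb{Z}_p$ and $c_{ij} \in \mathbb{Z}$ the ultrametric inequality gives
\begin{align}
|P_{k\alpha}(n;x)|_p \le \max_{0 \le j \le i \le k} |c_{ij}|_p\, |n|_p^{j\alpha}\, |x|_p^{i\alpha} \le \max_{0 \le j \le k} |n|_p^{j\alpha} \le \max\{1, |n|_p^{k\alpha}\}. \nonumber
\end{align}
Since $n$ is a positive integer, $|n|_p \le 1$, hence in fact $|P_{k\alpha}(n;x)|_p \le 1$ for all $n$ and all $x \in \mathbb{Z}_p$. (Even without this observation, $|n|_p \le 1$ already suffices; one does not need any subtle estimate here.) Consequently $|a_n|_p \le |(n+\nu)!|_p$, and the right-hand side goes to $0$.

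Putting the pieces together: fix $x \in \mathbb{Z}_p$ and any prime $p$. Then $|a_n|_p \le |(n+\nu)!|_p = p^{-\frac{n+\nu-s_{n+\nu}}{p-1}}$, and since $s_{n+\nu} \le (p-1)(\lfloor \log_p(n+\nu)\rfloor + 1)$ we get $\frac{n+\nu - s_{n+\nu}}{p-1} \ge \frac{n+\nu}{p-1} - \log_p(n+\nu) - 1 \to +\infty$, so $|a_n|_p \to 0$. By the criterion \eqref{2.3} (equivalently, by the Cauchy criterion together with ultrametricity, exactly as argued for \eqref{2.3} in the text), the series \eqref{2.1} converges in $\mathbb{Q}_p$. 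I do not expect a genuine obstacle here; the only point requiring a moment's care is confirming that the polynomial factor $P_{k\alpha}(n;x)$ cannot spoil the decay, and this is immediate once one notes that $n \in \mathbb{N}$ and $x \in \mathbb{Z}_p$ force $|n|_p \le 1$ and $|x|_p \le 1$, so the entire term is bounded in norm by $|(n+\nu)!|_p$.
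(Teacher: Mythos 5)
Your proof is correct and follows essentially the same route as the paper's: bound the general term by $|(n+\nu)!|_p = p^{-\frac{n+\nu-s_{n+\nu}}{p-1}}$ using ultrametricity together with $|P_{k\alpha}(n;x)|_p \le 1$ and $|x^{\alpha n+\beta}|_p \le 1$, then invoke the convergence criterion \eqref{2.3}. You merely spell out in more detail (via the integrality of the coefficients $c_{ij}$, $|n|_p\le 1$, and the logarithmic bound on $s_{n+\nu}$) steps the paper states without comment.
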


\begin{proof}
Consider $p$-adic norm of the general term in \eqref{2.1} when $x \in \mathbb{Z}_p \,,$ i.e.
\begin{align}  |\varepsilon^n \, (n+\nu)! \, P_{k\alpha}(n; x) \,  x^{\alpha n +\beta}|_p \leq  |(n +\nu)!|_p = p^{- \frac{n +\nu - s_{n +\nu}}{p-1}} \to 0
\,\, \text{as} \,\, n \to \infty \,,\label{2.5} \end{align}
where $|P_{k\alpha}(n; x)|_p \leq 1$ and $|x^{\alpha n +\beta}|_p \leq 1 \,.$
Hence,  the power series $\sum_{n=0}^\infty \varepsilon^n \, (n+\nu)! \, P_{k\alpha}(n; x) \,  x^{\alpha n +\beta}$
is convergent in $\mathbb{Z}_p$, i.e. $|x|_p \leq 1 \,.$
\end{proof}

 Since $\bigcap_p \mathbb{Z}_p = \mathbb{Z}$, it means that the
infinite series $\sum_{n=0}^\infty \varepsilon^n \, (n+\nu)! \, P_{k\alpha}(n; x) \,  x^{\alpha n +\beta}$ is simultaneously convergent for all
integers and all $p$-adic norms.

\section{Summation at Integer Points}

 Mainly we are interested for which polynomials $P_{k\alpha} (n; x)$ we have that if $x \in \mathbb{Z}$ then the sum of the series
(1) is $S_{k\alpha} (x) \in \mathbb{Z}$, i.e. $S_{k\alpha} (x)$ is also an integer which is the same in all $p$-adic cases. Since polynomials $P_{k\alpha} (n; x)$ are determined
by polynomials  $C_{i\alpha} (n), \, 0 \leq i \leq k $ \eqref{2.2},  it means that one has to find these $C_{i\alpha} (n), \, 0 \leq i \leq k .$  Our task is to find connection
between polynomial $P_{k\alpha} (n; x)$ and sum of infinite series $S_{k\alpha} (x),$ which  becomes also a polynomial.

We are interested now in determination of the polynomials $P_{k\alpha} (n; x)$ and the corresponding sums $S_{k\alpha} (x) = Q_{k\alpha} (x)$ of the infinite series
\eqref{2.1}, where
\begin{align}
Q_{k\alpha} (x)= q_{k\alpha} \, x^{k\alpha} \, + \, q_{(k-1)\alpha} \, x^{(k-1)\alpha} \, + \cdots + \, q_\alpha \, x^\alpha \, + \, q_0   \label{3.1}
\end{align}
are also some polynomials related to $P_{(k\alpha)} (n; x) ,$  so that $P_{(k\alpha)} (n; x) $ and $Q_{(k\alpha)} (x)$ do not depend on concrete
$p$-adic consideration and that they are valid for all $x \in \mathbb{Z} .$

A very simple and illustrative example  \cite{schikhof} of $p$-adic invariant summation of the infinite series \eqref{2.1} is
\begin{align} \sum_{n \geq 0} n! \, n = 1! 1 + 2! 2 + 3! 3 + ... = - 1 \label{3.2} \end{align}
which obtains  taking  $x = 1, \, \, P_{11}(n; 1) = n$ and gives $Q_{11} (1) = - 1.$
 To prove  \eqref{3.2}, one can use any one of the following two properties:
\begin{align} \sum_{n = 1}^{N-1} n! \, n  = -1 + N!  \,, \quad \, \qquad  n! n = (n+1)! - n! \,. \label{3.3} \end{align}

In the sequel we shall develop and apply approach of partial sums which generalize the first one in \eqref{3.3}.

\subsection{The Partial Sums}

Having in mind our goal on rational summation of the functional series \eqref{2.1}, let us consider
the  partial sums of its simplified version. Namely,
\begin{align}
S_k (N; x) &= \sum_{n=0}^{N-1} \varepsilon^n \, (n+\nu)! \, (n +\nu)^k \, x^{\alpha n +\beta} = \nu! \, \nu^k \, x^\beta +
\sum_{n=1}^{N-1} \varepsilon^n \, (n+\nu)! \, (n +\nu)^k \, x^{\alpha n +\beta} \nonumber \\
&= \nu! \, \nu^k \, x^\beta + \varepsilon \, x^\alpha \,\sum_{n=0}^{N-1} \varepsilon^{n} \, (n+\nu)! \, (n +\nu +1)^{k+1} \, x^{\alpha n +\beta}
- \varepsilon^N \, (N+\nu)! \, (N+\nu)^k \, x^{\alpha N +\beta} \nonumber \\
&= \nu! \, \nu^k \, x^\beta + \varepsilon \, x^\alpha \,\sum_{n=0}^{N-1} \varepsilon^{n} \, (n+\nu)! \, \sum_{\ell =0}^{k+1} \left(\begin{aligned} k&+1 \\
&\ell \end{aligned}\right)\, (n +\nu)^{\ell} \, x^{\alpha n +\beta} - \varepsilon^N \, (N+\nu)! \, (N+\nu)^k \, x^{\alpha N +\beta}  \nonumber     \\
&= \nu! \, \nu^k \, x^\beta + \varepsilon \, x^\alpha  \,  S_{0} (N; x) + \varepsilon \, x^\alpha  \, \sum_{\ell =1}^{k+1} \left(\begin{aligned} k&+1 \\
&\ell \end{aligned}\right)\, S_{\ell} (N; x) - \varepsilon^N \, (N+\nu)! \, (N+\nu)^k \, x^{\alpha N +\beta} \,,  \label{3.4}
\end{align}
where $S_0 (N; x)  = \sum_{n=0}^{N-1} \varepsilon^n \, (n +\nu)! \, x^{\alpha n + \beta}$. Obtained recurrence relation \eqref{3.4} gives possibility to
find sums $S_k (N; x) \,, \, \, k \in \mathbb{N}\,,$ with respect to $S_0 (N; x) .$ Performing operations for $k =0$ and $k = 1$ in \eqref{3.4}, one obtains
\begin{align}
S_1 (N; x) = \, &(\varepsilon \, x^{-\alpha} - 1) \, S_0(N; x) - \varepsilon \, \nu! \, x^{\beta - \alpha}  + \varepsilon^{n-1} \, (N+\nu)!\, x^{\alpha N + \beta - \alpha} \,,  \label{3.5}   \\
S_2 (N; x) = \, &((\varepsilon \, x^{-\alpha} - 2)(\varepsilon \, x^{-\alpha} - 1) -1)\, S_0(N; x) + \varepsilon \, \nu! \, x^{\beta - \alpha}\, (2 -\varepsilon \, x ^{-\alpha} - \nu) \nonumber \\   & \, + (\varepsilon \, x^{-\alpha} -2 + N + \nu)\, \varepsilon^{n-1} \, (N+\nu)!\, x^{\alpha N + \beta - \alpha} \,. \label{3.6}
\end{align}
Equations \eqref{3.5} and \eqref{3.6} can be rewritten in equivalent and more suitable form, respectively:
\begin{align}
&\sum_{n=0}^{N-1} \varepsilon^n \, (n + \nu)! [x^\alpha\, (n+\nu) + x^\alpha - \varepsilon] \, x^{\alpha n +\beta}  = - \varepsilon \, \nu! \, x^\beta + \varepsilon^{N-1} \, (N + \nu)!\, x^{\alpha N + \beta} \,, \label{3.7} \\
&\sum_{n=0}^{N-1} \varepsilon^n \, (n + \nu)! [ x^{2\alpha}\, (n+\nu)^2 - ( x^{2\alpha} - 3\, \varepsilon \, x^\alpha +1) ] \, x^{\alpha n +\beta}  =
\varepsilon \, \nu! \, [(2-\nu) x^\alpha - \varepsilon] \, x^\beta \nonumber \\
& + [(N + \nu - 2)\, x^\alpha + \varepsilon] \, \varepsilon^{N-1} \, (N + \nu)!\, x^{\alpha N + \beta} \,. \label{3.8}
\end{align}

\begin{theorem}
\label{Th1}
The recurrence relation \eqref{3.4} has solution in the form
\begin{align}
\sum_{n=0}^{N-1} \varepsilon^n \, (n + \nu)!\, [(n+\nu)^k \, x^{k \alpha} \, + U_{k\alpha} (x)] \, x^{\alpha n +\beta}  = V_{(k-1)\alpha} (x) + A_{(k-1)\alpha} (N; x) \,\varepsilon^{N-1} \, (N + \nu)!\, x^{\alpha N + \beta} \,, \label{3.9}
\end{align}
where polynomials $U_{k\alpha} (x) \,, V_{(k-1)\alpha} (x)$ and $A_{(k-1)\alpha} (N; x)$ satisfy the following recurrence relations:
\begin{align}
&\sum_{\ell =1}^{k+1} \left(\begin{aligned}k&+1\\ &\ell\end{aligned}\right) \, x^{(k-\ell+1)\alpha} \, U_{\ell\alpha}(x) - \varepsilon \, U_{k\alpha}(x) - x^{(k+1)\alpha} \, = 0 \,, \quad U_{1\alpha} (x) = x^\alpha - \varepsilon \,, \quad k = 1, 2, ... \,,\label{3.10} \\
&\sum_{\ell =1}^{k+1} \left(\begin{aligned}k&+1\\ &\ell\end{aligned}\right) \, x^{(k-\ell+1)\alpha} \, V_{(\ell-1)\alpha}(x) - \varepsilon \, V_{(k-1)\alpha}(x) + \varepsilon \,
\nu!  \, \nu^k \, x^{k\alpha + \beta} \, = 0 \,, \quad V_0 (x) = - \varepsilon \, \nu! \, x^\beta \,, \quad k = 1, 2, ... \,,  \label{3.11}\\
&\sum_{\ell =1}^{k+1} \left(\begin{aligned} k&+1\\ &\ell\end{aligned}\right) \, x^{(k-\ell+1)\alpha} \, A_{(\ell-1)\alpha}(N;x) - \varepsilon \, A_{(k-1)\alpha}(N;x) - (N +\nu)^k \, x^{k\alpha} \, = 0 \,, \quad  A_0 (N; x) = 1 \,, \quad k = 1, 2, ... \,  . \label{3.12}
\end{align}
\end{theorem}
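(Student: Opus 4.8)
The plan is a strong induction on $k$, preceded by a reformulation of \eqref{3.9} that exhibits the three elementary pieces out of which everything is built. Because $U_{k\alpha}(x)$ does not depend on the summation index $n$, the identity \eqref{3.9} is equivalent to
\begin{equation*}
x^{k\alpha}\, S_k(N;x) + U_{k\alpha}(x)\, S_0(N;x) = V_{(k-1)\alpha}(x) + A_{(k-1)\alpha}(N;x)\, \varepsilon^{N-1}\, (N+\nu)!\, x^{\alpha N+\beta}\,, \tag{$\star$}
\end{equation*}
where $S_k(N;x)=\sum_{n=0}^{N-1}\varepsilon^n(n+\nu)!\,(n+\nu)^k\,x^{\alpha n+\beta}$ is exactly the quantity controlled by \eqref{3.4} and $S_0(N;x)$ is the sum appearing there. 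I would take \eqref{3.10}--\eqref{3.12}, together with their stated initial data, as the \emph{definitions} of $U_{k\alpha}$, $V_{(k-1)\alpha}$, $A_{(k-1)\alpha}$: each is a legitimate recursive definition because the $\ell=k+1$ summand contributes exactly $U_{(k+1)\alpha}(x)$ (resp.\ $V_{k\alpha}(x)$, $A_{k\alpha}(N;x)$), so the newest object is isolated with coefficient $1$ while every other term involves only lower-index data. The base case is $k=1$: the identity $(\star)$ at $k=1$ is nothing but the already-derived \eqref{3.7} (equivalently \eqref{3.5}), and reading it off gives $U_{1\alpha}(x)=x^\alpha-\varepsilon$, $V_0(x)=-\varepsilon\,\nu!\,x^\beta$, $A_0(N;x)=1$, exactly the prescribed initial values.

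For the inductive step I would assume $(\star)$ for every index $1\le j\le k$ and use \eqref{3.4} with parameter $k$, in which $S_{k+1}(N;x)$ occurs with coefficient $\varepsilon\,x^\alpha$. Solving that relation for $S_{k+1}(N;x)$ and multiplying through by $x^{(k+1)\alpha}$ — using $\varepsilon^{-1}=\varepsilon$ and $\varepsilon^{N+1}=\varepsilon^{N-1}$ — converts every remaining sum into one of the quantities $x^{j\alpha}S_j(N;x)$ with $0\le j\le k$; in particular no negative powers of $x$ survive, so the induction hypothesis can be substituted term by term. After that substitution the right-hand side is a sum of three kinds of contributions: a polynomial-in-$x$ multiple of $S_0(N;x)$; a part free of $S_0$ and of the common boundary factor $\varepsilon^{N-1}(N+\nu)!\,x^{\alpha N+\beta}$; and a multiple of that boundary factor. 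Requiring that these three match the corresponding pieces on the right-hand side of $(\star)$ for $k+1$ — i.e.\ that the $S_0$-coefficient be $-U_{(k+1)\alpha}(x)$, the $S_0$-free part be $V_{k\alpha}(x)$, and the boundary coefficient be $A_{k\alpha}(N;x)$ — and then reabsorbing the $\ell=k+1$ term back into each sum, one recovers precisely \eqref{3.10}, \eqref{3.11}, \eqref{3.12} with parameter $k$. Hence $(\star)$, and with it \eqref{3.9}, holds for $k+1$. A short auxiliary induction (again on $k$, using the same three recurrences) shows in addition that $U_{k\alpha}$ is a polynomial in $x$ of degree $k\alpha$, that $V_{(k-1)\alpha}$ equals $x^\beta$ times a polynomial of degree $(k-1)\alpha$, and that $A_{(k-1)\alpha}$ is a polynomial of degree $(k-1)\alpha$ in $x$ whose coefficients are polynomials in $N$.

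The genuine difficulty here is clerical rather than conceptual: one has to keep the binomial coefficients, the exponents $(k-\ell+1)\alpha$ of $x$, and the powers of $\varepsilon$ in exact correspondence when passing from \eqref{3.4} to $(\star)$, and one must justify matching the three term-types separately — which is legitimate because, once the induction hypothesis has been inserted, the $S_0(N;x)$-part and the $\varepsilon^{N-1}(N+\nu)!\,x^{\alpha N+\beta}$-part never cancel against or merge with the remaining part. A useful preliminary check is to carry out $k=2$ by hand: $(\star)$ should collapse to \eqref{3.6}/\eqref{3.8} with $U_{2\alpha}(x)=-\bigl(x^{2\alpha}-3\varepsilon\,x^\alpha+1\bigr)$, $V_\alpha(x)=\varepsilon\,\nu!\,\bigl[(2-\nu)x^\alpha-\varepsilon\bigr]x^\beta$, and $A_\alpha(N;x)=(N+\nu-2)x^\alpha+\varepsilon$, which also serves as a consistency check on the sign and normalisation conventions of \eqref{3.11} and \eqref{3.12}.
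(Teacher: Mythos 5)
Your proposal is correct and follows essentially the paper's own route: the paper likewise rewrites \eqref{3.9} as an expression for $S_k(N;x)$ in terms of $S_0(N;x)$ (its equation \eqref{3.13}, which is your $(\star)$ up to the factor $x^{k\alpha}$), substitutes it into \eqref{3.4}, and obtains \eqref{3.10}--\eqref{3.12} by collecting the $S_0$-terms, the boundary terms with $\varepsilon^{N-1}(N+\nu)!\,x^{\alpha N+\beta}$, and the remaining terms. Your explicit strong induction with the recurrences taken as definitions, the $k=1$ base case via \eqref{3.7}, and the check that no negative powers of $x$ survive is just a more carefully formalized version of that same substitution-and-matching argument.
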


\begin{proof}
Formula \eqref{3.9} can be rewritten as
\begin{align}
S_k (N; x) = - x^{- k\alpha} U_{k\alpha} (x) S_0 (N; x) + x^{- k\alpha} V_{(k-1)\alpha} (x) + A_{(k-1)\alpha} (N; x) x^{- k\alpha} \varepsilon^{N-1} (N +\nu )! x^{\alpha N + \beta} . \label{3.13}
\end{align}
Now one can replace $S_k (N; x)$ in recurrence relation \eqref{3.4} by this one in \eqref{3.13}. Compiling  the terms separately  with $S_0 (n; s) ,$ then with $\, x^{\alpha N + \beta}$ and finally all the rest terms, we obtain respectively recurrence relations for $U_{k\alpha} (x) , \, A_{(k-1)\alpha} (N; x)$ and $V_{(k-1)\alpha} (x) .$
\end{proof}

Note that factor $x^\beta$ does not play an important role in \eqref{3.9}, because $V_{(k-1)\alpha} (x)$ also contains $x^\beta$ and it can be excluded from this formula by redefinition of  $V_{(k-1)\alpha} (x)$.

\begin{theorem}  \label{Th2}
Polynomials  $U_{k\alpha} (x) $ and $V_{(k-1)\alpha} (x)$ are related to polynomial $ A_{(k-1)\alpha} (N; x)$ in the  form
\begin{align}
& U_{k\alpha} (x) = (\nu + 1) x^\alpha A_{(k-1)\alpha} (1; x) - \varepsilon A_{(k-1)\alpha} (0; x) - \nu^k x^{k\alpha} \,, \quad  k \in \mathbb{N} \,,  \label{3.14} \\
& V_{(k-1)\alpha} (x) = - \varepsilon \, \nu! \, x^\beta \, A_{(k-1)\alpha} (0; x) \,, \quad  k \in \mathbb{N} .   \label{3.15}
\end{align}
\end{theorem}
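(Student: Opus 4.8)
The plan is to derive both relations by specializing the closed-form identity \eqref{3.9} of Theorem~\ref{Th1} at the two smallest values of the cutoff, namely $N=0$ and $N=1$, rather than by an induction on $k$. The key point is that \eqref{3.9} is, for every fixed $N\in\mathbb{N}_0$, a polynomial identity in $x$, and for these two values of $N$ its left-hand side degenerates to something one can read off immediately.

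First I would put $N=0$ in \eqref{3.9}. The sum $\sum_{n=0}^{N-1}$ then becomes the empty sum, so the left-hand side vanishes, while the right-hand side equals $V_{(k-1)\alpha}(x)+A_{(k-1)\alpha}(0;x)\,\varepsilon^{-1}\,\nu!\,x^{\beta}$. Since $\varepsilon=\pm1$ we have $\varepsilon^{-1}=\varepsilon$, and solving for $V_{(k-1)\alpha}(x)$ gives precisely \eqref{3.15}.

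Next I would put $N=1$ in \eqref{3.9}. Now only the $n=0$ term survives on the left, producing $\nu!\,[\nu^{k}x^{k\alpha}+U_{k\alpha}(x)]\,x^{\beta}$, while the right-hand side becomes $V_{(k-1)\alpha}(x)+(\nu+1)!\,x^{\alpha+\beta}\,A_{(k-1)\alpha}(1;x)$. Substituting the formula for $V_{(k-1)\alpha}(x)$ just obtained, cancelling the common nonzero factor $\nu!\,x^{\beta}$, and isolating $U_{k\alpha}(x)$ yields \eqref{3.14}. As a consistency check one notes that for $k=1$, where $A_{0}(N;x)=1$, these formulas return $U_{1\alpha}(x)=x^{\alpha}-\varepsilon$ and $V_{0}(x)=-\varepsilon\,\nu!\,x^{\beta}$, matching the initial data of the recurrences \eqref{3.10}--\eqref{3.11}.

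The only step that genuinely needs attention — and the closest thing here to an obstacle — is the justification that \eqref{3.9} may be evaluated at $N=0$: one must observe that the argument behind Theorem~\ref{Th1} delivers \eqref{3.9} as an honest identity of polynomials in $x$ for each $N\ge 0$, with the empty-sum convention on the left-hand side, so that the specialization is legitimate (and likewise that $\nu!\,x^{\beta}$ may be cancelled, the relations being polynomial identities). An alternative but more laborious route would prove \eqref{3.14}--\eqref{3.15} by induction on $k$ directly from \eqref{3.10}--\eqref{3.12}; the work there lies in matching, term by term, the binomial sums $\sum_{\ell=1}^{k+1}\binom{k+1}{\ell}x^{(k-\ell+1)\alpha}(\,\cdot\,)$ common to all three recurrences — bookkeeping that the evaluation argument sidesteps completely.
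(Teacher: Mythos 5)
Your argument is correct and close in spirit to the paper's: both proofs extract \eqref{3.14}--\eqref{3.15} by exploiting the $N$-dependence of the identity \eqref{3.9} at the smallest cutoffs. The paper organizes this slightly differently: it first subtracts the instances of \eqref{3.9} with upper limits $N-1$ and $N-2$, obtaining relation \eqref{3.16}, which contains $U_{k\alpha}(x)$ but not $V_{(k-1)\alpha}(x)$, sets $N=1$ there to get \eqref{3.14}, and then combines this with \eqref{3.9} at $N=1$ (equation \eqref{3.17}) to deduce \eqref{3.15}; you reverse the order, reading \eqref{3.15} off the empty-sum case $N=0$ and then \eqref{3.14} off $N=1$. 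The boundary point you flag is the only real delicacy, and your justification is sound: the closed form of the recurrence \eqref{3.4} does hold at $N=0$ with the empty-sum convention (both sides vanish, the boundary term $-\varepsilon^N (N+\nu)!\,(N+\nu)^k x^{\alpha N+\beta}$ cancelling $\nu!\,\nu^k x^\beta$), so the induction behind Theorem~\ref{Th1} indeed yields \eqref{3.9} for $N=0$; note that the paper's own step of putting $N=1$ in \eqref{3.16} rests on essentially the same justification (either the $N=0$ instance of \eqref{3.9} or the fact that both sides of \eqref{3.16} are polynomials in $N$), so your route is not weaker on this score. Cancelling the factor $\nu!\,x^\beta$ is likewise harmless, these being polynomial identities in $x$. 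In sum, your proof is a legitimate and marginally more direct variant of the paper's, obtaining $V_{(k-1)\alpha}$ first and $U_{k\alpha}$ second instead of the reverse.
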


\begin{proof}
We  use equation \eqref{3.9}.  Note that $U_{k\alpha} (x) $ and $V_{(k-1)\alpha} (x)$ do not depend on the upper limit of summation in \eqref{3.9}. Hence,  subtracting
equations in  \eqref{3.9} with $\sum_{n=0}^{N-1}$ and $\sum_{n=0}^{N-2} ,$ we obtain relation
\begin{equation}
(N + \nu -1)^k \, x^{k\alpha} + U_{k\alpha} (x) = (N + \nu) \, x^\alpha \, A_{(k-1)\alpha} (N; x) - \varepsilon \, A_{(k-1)\alpha} (N-1; x)  \label{3.16}
\end{equation}
which does not contain $V_{(k-1)\alpha} (x) .$  Taking $N = 1$ in \eqref{3.16}, one obtains expression \eqref{3.14} for $U_{k\alpha} (x) .$ Now using \eqref{3.9}
when $N= 1$ gives
\begin{equation}
\nu! \, \nu^k \, x^{k\alpha + \beta} + U_{k\alpha} (x) \, x^\beta \, \nu! = V_{(k-1)\alpha} (x) + A_{(k-1)\alpha} (1; x) \, (\nu + 1)! \, x^{\alpha + \beta} . \label{3.17}
\end{equation}
Combining \eqref{3.16} and  \eqref{3.17}, it follows \eqref{3.15}.
\end{proof}

Recurrent formulas \eqref{3.10}--\eqref{3.12} enable to calculate  polynomials $U_{k\alpha} (x), \, V_{(k-1)\alpha} (x)$ and $A_{(k-1)\alpha} (N; x)$ for any $k \in \mathbb{N}$,
knowing initial expressions: $U_1(x) = x^\alpha - \varepsilon, \, V_0(x) =
- \varepsilon \, \nu! \, x^\beta$ and $A_0 (N; x) = 1$.
For the first five values of degree $k$, we have obtained the following explicit expressions.

\begin{itemize}
\item  $k = 1$
\begin{align}
U_{1\alpha}(x) =& x^\alpha -\varepsilon, \nonumber\\
V_0(x) =& - \varepsilon \, \nu! \, x^\beta, \nonumber \\
A_0 (n; x) =& 1. 
\end{align}
\item  $k = 2$
\begin{align}
U_{2\alpha}(x) =& - x^{2\alpha} + 3 \varepsilon x^\alpha -1, \nonumber\\
V_{1\alpha}(x) =& -\varepsilon \nu! x^\beta [(\nu - 2) x^\alpha +  \varepsilon], \nonumber\\
A_{1\alpha}(n; x) =& (n + \nu -2) x^\alpha  + \varepsilon .  
\end{align}
\item $k = 3$
\begin{align}
U_{3\alpha}(x) =&  x^{3\alpha} - 7 \varepsilon x^{2\alpha}  + 6 x^\alpha -\varepsilon, \nonumber\\
V_{2\alpha}(x) =& -\varepsilon \nu! x^\beta [(\nu^2 - 3 \nu + 3) x^{2\alpha} + (\nu -5) \varepsilon x^\alpha + 1], \nonumber\\
A_{2\alpha}(n; x) =& [(n+\nu)^2 -3(n +\nu) +3] x^{2\alpha} + (n + \nu -5)\varepsilon x^\alpha  + 1 .  
\end{align}
\item   $k = 4$
\begin{align}
U_{4\alpha}(x) =&  - x^{4\alpha} + [\nu^3 (1- \varepsilon) - 4 \nu^2 (1 - \varepsilon) + 6 \nu (1 - \varepsilon) + 11 + 4 \varepsilon] x^{3\alpha}
\nonumber \\ &+ [\nu^2 (1- \varepsilon) - 7 \nu (1- \varepsilon) - 8 - 17 \varepsilon] x^{2\alpha} + 10 \varepsilon x^\alpha - 1  , \nonumber\\
V_{3\alpha}(x) =& -\varepsilon \nu! x^\beta [(\nu^3 - 4 \nu^2 + 6 \nu - 4) x^{3\alpha} + (\nu^2 - 7 \nu + 17) \varepsilon x^{2\alpha} + (\nu -9) x^\alpha + \varepsilon], \nonumber\\
A_{3\alpha}(n; x) =& [(n+ \nu )^3 - 4 (n + \nu)^2 +6 (n + \nu) -4] x^{3\alpha} + [(n + \nu )^2 -7 (n + \nu) +17] \varepsilon x^{2\alpha}   \nonumber\\
 &+ (n + \nu -9) x^\alpha  + \varepsilon . 
\end{align}
\item    $k = 5$
\begin{align}
U_{5\alpha}(x) =& x^{5\alpha} - (\nu^3 + 31) \varepsilon x^{4\alpha} + 90 x^{3\alpha}  -65 \varepsilon x^{2\alpha} + 15 x^\alpha -\varepsilon ,  \nonumber \\
V_{4\alpha}(x) =& -\varepsilon \nu! x^\beta [(\nu^4 - 5 \nu^3 + 10 \nu^2 - 10 \nu + 5) x^{4\alpha} + (\nu^3 - 9\nu^2 + 31 \nu - 49) \varepsilon x^{3\alpha} \nonumber \\ &+ (\nu^2 - 12 \nu + 52) x^{2\alpha} + (\nu - 14) \varepsilon x^\alpha + 1],   \nonumber\\
A_{4\alpha}(n; x) =&  [(n + \nu )^4 -5 (n + \nu )^3 +10 (n + \nu)^2 -10 (n + \nu) +5] x^{4\alpha}  \nonumber \\ &+  [(n + \nu )^3 - 9 (n + \nu )^2 + 31
(n + \nu ) - 49] \varepsilon x^{3\alpha}  \nonumber \\ &+ [(n + \nu )^2 -12 (n + \nu ) + 52] x^{2\alpha} + (n + \nu - 14) \varepsilon  x^\alpha + 1 . 
\end{align}
\end{itemize}

It is worth emphasizing that all the above equalities, in particular \eqref{3.4} and \eqref{3.9}, are valid in real and all $p$-adic cases. The central role in \eqref{3.9} plays
polynomial $A_{k\alpha} (N; x), $ which is solution of the recurrence relation \eqref{3.12}, because  polynomials $U_{k\alpha} (x)$ and $V_{(k-1)}(x)$ are simply connected to $A_{k\alpha} (N; x) $ by formulas \eqref{3.14} and \eqref{3.15}, respectively.
 When $N \to \infty$ in \eqref{3.9}, the term with polynomial
$A_{(k-1)\alpha} (N; x)$ $p$-adically vanishes giving the sum of the following $p$-adic infinite functional series:
\begin{equation}
 \sum_{n=0}^{\infty} \varepsilon^n (n + \nu)! \, [ (n + \nu)^k \, x^{k\alpha} + U_{k\alpha} (x) ] \, x^{\alpha n+\beta} = V_{(k-1)\alpha} (x) .    \label{3.19}
\end{equation}
This equality has the same form for any $k \in \mathbb{N} ,$ and polynomials $U_{k\alpha} (x)$ and $V_{(k-1)\alpha} (x)$ separately have the same values in all $p$-adic cases for any
$x \in \mathbb{Z} .$ In other words, nothing depends on particular $p$-adic properties in \eqref{3.19} when $x \in \mathbb{Z}$, i.e. this is $p$-adic invariant
result. This result gives us the possibility to present a general solution of the problem posed on $p$-adic invariant summation of the series \eqref{2.1}.

\begin{theorem}   \label{Th3}
The functional series \eqref{2.1}  has $p$-adic invariant sum
\begin{equation}
S_{k\alpha} (x) \equiv \sum_{n = 0}^{+\infty} \varepsilon^n \, (n + \nu)! \, P_{k\alpha} (n; x) \, x^{\alpha n + \beta} = Q_{k\alpha} (x)   \label{3.20}
\end{equation}
if
\begin{equation}
P_{k\alpha} (n; x) = \sum_{j =1}^k B_j \, [(n+\nu)^{j} x^{j\alpha} + U_{j\alpha} (x)]    \quad \text{and} \quad   Q_{k\alpha} (x) = \sum_{j =1}^k B_j \, U_{j\alpha} (x),  \label{3.21}
\end{equation}
where $B_j, \, x \in \mathbb{Z} .$
\end{theorem}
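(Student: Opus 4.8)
The plan is to read this statement off the infinite-sum identity \eqref{3.19} by linearity, since the candidate $P_{k\alpha}(n;x)$ displayed in \eqref{3.21} is, by construction, an integer linear combination of exactly the bracketed quantities $(n+\nu)^{j}x^{j\alpha}+U_{j\alpha}(x)$ that appear on the left of \eqref{3.19}. So the actual content to be verified is that each ingredient is legitimate in every $\mathbb{Q}_p$ simultaneously and that the assembled right-hand side is indeed the polynomial $Q_{k\alpha}(x)$.

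First I would record that the series \eqref{3.20} converges $p$-adically for every prime $p$. Since $B_j,\nu,x\in\mathbb{Z}$ and each $U_{j\alpha}$ has integer coefficients — which follows inductively from the recurrences \eqref{3.10}--\eqref{3.12} (and \eqref{3.14}--\eqref{3.15}), all starting from integer initial data and having integer (binomial) coefficients — the value $P_{k\alpha}(n;x)$ is an integer for every $n$, hence $|P_{k\alpha}(n;x)|_p\le1$ and $|x^{\alpha n+\beta}|_p\le1$, so the estimate \eqref{2.5} applies verbatim and the general term tends $p$-adically to $0$. The same bound shows that each of the $k$ series $\sum_{n\ge0}\varepsilon^n(n+\nu)!\,[(n+\nu)^{j}x^{j\alpha}+U_{j\alpha}(x)]\,x^{\alpha n+\beta}$, $1\le j\le k$, converges as well.

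Next, because the coefficients $B_j$ do not depend on the summation index $n$, I would split the single series \eqref{3.20} as $\sum_{j=1}^{k}B_j\sum_{n\ge0}\varepsilon^n(n+\nu)!\,[(n+\nu)^{j}x^{j\alpha}+U_{j\alpha}(x)]\,x^{\alpha n+\beta}$; this rearrangement is harmless because $k$ is finite, so a finite linear combination of convergent series converges to the linear combination of the individual sums. Invoking \eqref{3.19} with $k$ replaced successively by $j=1,\dots,k$, the $j$-th inner series equals $V_{(j-1)\alpha}(x)$, and therefore $S_{k\alpha}(x)=\sum_{j=1}^{k}B_j\,V_{(j-1)\alpha}(x)=:Q_{k\alpha}(x)$; by \eqref{3.15} this can also be written $-\varepsilon\,\nu!\,x^\beta\sum_{j=1}^{k}B_j\,A_{(j-1)\alpha}(0;x)$, a polynomial in $x$ of the shape \eqref{3.1}.

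It remains to note the $p$-adic invariance: every step above — the bound \eqref{2.5}, the recurrences \eqref{3.10}--\eqref{3.12}, and the identities \eqref{3.9}, \eqref{3.19} — holds in $\mathbb{Q}_p$ for all $p$ at once, and the polynomials $U_{j\alpha},V_{(j-1)\alpha},A_{(j-1)\alpha}$ they produce carry coefficients in $\mathbb{Z}$ that do not depend on $p$; hence for $x\in\mathbb{Z}$ both sides of \eqref{3.20} equal the same integer in every completion. I expect the only delicate point to be organizational rather than analytic: one must check that collecting the pieces $V_{(j-1)\alpha}(x)$ (equivalently, the $A_{(j-1)\alpha}(0;x)$) genuinely assembles into a polynomial of the form \eqref{3.1} with $p$-independent integer coefficients, and that the stray factor $x^\beta$ — which, as noted after Theorem \ref{Th1}, can be absorbed into a redefinition of the $V$'s — is kept consistent on both sides of the identity. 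Beyond that bookkeeping there is no obstacle, since Theorems \ref{Th1} and \ref{Th2} together with \eqref{3.19} already carry the full analytic weight.
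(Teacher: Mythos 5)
Your argument is correct and is essentially the proof the paper intends: Theorem \ref{Th3} is presented as an immediate consequence of \eqref{3.19}, and your proof is exactly that route --- convergence of each constituent series via the estimate \eqref{2.5} (with integrality of $P_{k\alpha}(n;x)$ coming from the integer-coefficient recursions \eqref{3.10}--\eqref{3.12} and $B_j,\nu,x\in\mathbb{Z}$), followed by a finite $\mathbb{Z}$-linear combination of the identities \eqref{3.19} for $j=1,\dots,k$, all of which hold in every $\mathbb{Q}_p$ simultaneously when $x\in\mathbb{Z}$.

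One point deserves explicit mention rather than silent treatment. Your conclusion is $S_{k\alpha}(x)=\sum_{j=1}^{k}B_j\,V_{(j-1)\alpha}(x)$, and you then \emph{define} this to be $Q_{k\alpha}(x)$; but the theorem as printed defines $Q_{k\alpha}(x)=\sum_{j=1}^{k}B_j\,U_{j\alpha}(x)$, which is a different polynomial. Already for $k=1$, $\varepsilon=\alpha=x=1$, $\nu=\beta=0$ one has $P_{11}(n;1)=n$ and $\sum_{n\ge 0}n!\,n=-1=V_0(1)$, whereas $B_1U_{1\alpha}(1)=0$. So the printed statement contains a slip ($U_{j\alpha}(x)$ should read $V_{(j-1)\alpha}(x)$, equivalently $-\varepsilon\,\nu!\,x^{\beta}A_{(j-1)\alpha}(0;x)$ by \eqref{3.15}), and what your proposal actually establishes is this corrected version --- which is the right one, and consistent with \eqref{3.26}--\eqref{3.27}. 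State that correction explicitly instead of renaming $Q$ on the fly; otherwise the argument, including the $p$-adic invariance bookkeeping, is sound and matches the paper's (implicit) proof.
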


Note that
$A_{k\alpha} (n; x)$ as well as $U_{k\alpha} (x)$ and $V_{(k-1)\alpha} (x)$ can be written in the compact form
\begin{equation}
A_{k\alpha} (n; x) = \sum_{\ell = 0}^k A_{(k\alpha)\ell} (n+ \nu) \, x^{\ell\alpha} , \quad U_{k\alpha} (x)= \sum_{\ell = 0}^k  U_{(k\alpha)\ell} \, x^{\ell\alpha} ,   \quad V_{k\alpha} (x) = \sum_{\ell = 0}^k  V_{(k\alpha)\ell} \, x^{\ell\alpha} ,      \label{3.22}
\end{equation}
where $A_{(k\alpha)\ell} (n+\nu)$ is a polynomial in $n +\nu$ of degree $\ell$ with $(n +\nu)^\ell$ as the term of the highest degree.

Putting $x = 0$ in \eqref{3.10}--\eqref{3.12}, the following properties hold:
\begin{itemize}
\item  $A_{k\alpha}(n; 0) = \varepsilon A_{(k-1)\alpha}(n; 0) = \varepsilon^k ,  \, \, \, k =  1, 2, ...$
\item  $ U_{(k+1)\alpha} (0) = \varepsilon U_{k\alpha} (0) = - \varepsilon^{k+1} , \, \, \,  k =  1, 2, ...$
\item  $ V_{k\alpha} (0) = \varepsilon V_{(k-1)\alpha} (0) = -\nu! x^\beta \varepsilon^{k+1} , \, \, \, k =  1, 2, ...$
\end{itemize}

As an illustration of  summation formula \eqref{3.19}, we present five simple (k = 1, ...,5) examples.

\begin{itemize}
\item  $ k = 1$
\begin{align}
 \sum_{n=0}^{\infty} \varepsilon^n \,(n+\nu)! \, [(n + \nu + 1)x^\alpha  - \varepsilon]\, x^{\alpha n} = - \varepsilon\, \nu! \,,  \quad x \in \mathbb{Z} .   \label{3.23}
\end{align}

\item $k = 2$
\begin{align}
 \sum_{n=0}^{\infty} \varepsilon^n \,(n+\nu)! \, \{[(n+\nu)^2 - 1] x^{2\alpha} + 3 \varepsilon x^\alpha - 1\}\, x^{\alpha n} = \varepsilon\, \nu!\, [(2 - \nu)x^\alpha -\varepsilon] \,,  \quad x \in \mathbb{Z} .  \label{3.24}
\end{align}

\item $k = 3$
\begin{align}
 &\sum_{n=0}^{\infty} \varepsilon^n \,(n+\nu)! \, \{[(n+\nu)^3 + 1] x^{3\alpha}  - 7 \varepsilon  x^{2\alpha} + 6 x^\alpha - \varepsilon\} \, x^{\alpha n} \nonumber \\ &= - \varepsilon\, \nu! [(\nu^2 -3 \nu +3) x^{2\alpha} + (\nu -5) \varepsilon x^\alpha +1] ,  \quad x \in \mathbb{Z} . \label{3.25}
\end{align}

\item $k = 4$
\begin{align}
 &\sum_{n=0}^{\infty} \varepsilon^n \,(n+\nu)! \, \{ \,[(n+\nu)^4 - 1] x^{4\alpha} + [\nu^3 (1- \varepsilon) - 4 \nu^2 (1 - \varepsilon) + 6 \nu (1 - \varepsilon) + 11 + 4 \varepsilon] x^{3\alpha} \nonumber \\
&+ [\nu^2 (1- \varepsilon) - 7 \nu (1- \varepsilon) - 8 - 17 \varepsilon] x^{2\alpha} + 10 \varepsilon x^\alpha - 1\,  \}\, x^{\alpha n} \nonumber \\
&= -\varepsilon \, \nu! \, [(\nu^3 - 4 \nu^2 + 6 \nu - 4) x^{3\alpha} + (\nu^2 - 7 \nu + 17) \varepsilon x^{2\alpha} + (\nu -9) x^\alpha + \varepsilon] ,  \quad x \in \mathbb{Z} . \label{3.25}
\end{align}

\item $k = 5$
\begin{align}
 &\sum_{n=0}^{\infty} \varepsilon^n \,(n+\nu)! \, \{ \,[(n+\nu)^5 + 1] x^{5\alpha} - (\nu^3 + 31) \varepsilon x^{4\alpha} + 90 x^{3\alpha}  -65 \varepsilon x^{2\alpha} + 15 x^\alpha -\varepsilon  \, \} \, x^{\alpha n} \nonumber \\
 &= -\varepsilon \, \nu! \, [(\nu^4 - 5 \nu^3 + 10 \nu^2 - 10 \nu + 5) x^{4\alpha} + (\nu^3 - 9\nu^2 + 31 \nu - 49) \varepsilon x^{3\alpha} \nonumber \\ &+ (\nu^2 - 12 \nu + 52) x^{2\alpha} + (\nu - 14) \varepsilon x^\alpha + 1]  ,  \quad x \in \mathbb{Z} . \label{3.25}
\end{align}

\end{itemize}

\section{Discussion and Concluding Remarks}

The main results presented in this paper are summation formula \eqref{3.4} and theorems (\ref{Th1})--(\ref{Th3}). These results are  generalizations of some earlier results, see  \cite{bd9,bd10,bd11,bd14}.

Finite series \eqref{3.4} with their sums \eqref{3.9}   are valid for real and  $p$-adic numbers. When $n \to \infty$ the corresponding infinite
series are divergent in real case, but are convergent and have  the same sums in all $p$-adic cases. This fact can be used to extend
these sums to the real case. Namely, the sum of a divergent series depends on the way of its summation and here it can be used its integer sum                valid in all $p$-adic number fields. This way of summation of real divergent series was introduced for the first time in
\cite{bd2} and called adelic summability. An importance of this adelic summability  depends on its potential future use in some concrete examples.

The simplest infinite series with $n!$ is $\sum n! .$ It is convergent in all $\mathbb{Z}_p ,$ but has not $p$-adic invariant sum. Even it is not
known so far does it has a rational sum in any $\mathbb{Z}_p .$  Rationality of this series and $\sum n! n^k x^n$ was discussed in \cite{bd9}. The series
$\sum n!$ is also related to Kurepa hypothesis which  states $(!n, n!) = 2, \quad 2 \leq n \in \mathbb{N} ,$  where $!n = \sum_{j=0}^{n-1} j!$. Validity
of this hypothesis is still an open problem in number theory. There are many equivalent statements to the Kurepa hypothesis, see  \cite{bd10} and references
therein. From $p$-adic point of view, the Kurepa hypothesis reads: $\sum_{j=0}^{\infty} j!  = n_0 + n_1 p + n_2 p^2 + \cdots ,$ where  digit $n_0 \neq 0$ for all primes $p \neq 2 .$

It is worth emphasizing that polynomials $A_{k\alpha}(n; x)$ contain all information about properties of series \eqref{3.9}. For various combinations of $x = 0, \pm 1, \pm 2, ..., $ \, \, $n = 0, 1, 2, ...$ and parameters $k, \alpha, \in \mathbb{N}$ one can obtain integer sequences, and some of them are already known. Note that parameter $\nu$ in polynomials $A_{k\alpha}(n; x)$ appears in the form $n + \nu$ and it is enough to consider how $A_{k\alpha}(n; x)$ depends on $n$. Hence we will consider $A_{k\alpha}(n; x)$ with  parameter $\nu = 0 .$  Here are some simple sequences derived from $A_{k\alpha}(n; x) .$
\begin{itemize}
\item $\alpha =$ any even natural number :
\begin{align}
&A_{k\alpha}(0; \pm 1): \quad 1, \, \varepsilon - 2, \, 4 - 5 \varepsilon , \, -13 + 18 \varepsilon, \,  58 - 63 \varepsilon ,  ...  \qquad k \in \mathbb{N}_0 \,, \quad \varepsilon = \pm 1 \,, \nonumber \\
&A_{k\alpha}(1; \pm 1): \quad 1, \, -1 + \varepsilon , \, 2 - 4 \varepsilon , \, -9 + 12 \varepsilon, \,  43 - 39 \varepsilon , \, ...  \qquad k \in \mathbb{N}_0 \,,  \quad \varepsilon = \pm 1 \,. \nonumber
\end{align}

\item $\alpha =$ any odd natural number :
\begin{align}
&A_{k\alpha}(0;  1): \quad 1, \, \, \varepsilon - 2, \,  4 - 5 \varepsilon , \, \, -13 + 18 \varepsilon, \, \, 58 - 63 \varepsilon , \, \, ...  \qquad k \in \mathbb{N}_0 \,, \quad \varepsilon = \pm 1 \,, \nonumber \\
&A_{k\alpha}(1;  1): \quad 1, \, \, -1 + \varepsilon , \, \, 2 - 4 \varepsilon , \, \, -9 + 12 \varepsilon, \, \,  43 - 39 \varepsilon , \, \, ...  \qquad k \in \mathbb{N}_0 \,,  \quad \varepsilon = \pm 1 \,. \nonumber
\end{align}

\item $\alpha =$ any odd natural number :
\begin{align}
&A_{k\alpha}(0; - 1): \quad 1, \, \, 2 + \varepsilon , \, \, 4 + 5 \varepsilon , \, \, 13 + 18 \varepsilon, \, \,  58 + 63 \varepsilon , \, \, ...  \qquad k \in \mathbb{N}_0 \,,  \quad \varepsilon = \pm 1 \,, \nonumber \\
&A_{k\alpha}(1; - 1): \quad 1, \, \, 1 + \varepsilon , \, \,  4 \varepsilon , \, \, 9 + 12 \varepsilon, \, \,  43 + 39 \varepsilon , \, \, ... \qquad \, \qquad k \in \mathbb{N}_0 \,, \quad \varepsilon = \pm 1 \,. \nonumber
\end{align}

\end{itemize}

Below are also some simple integer sequences derived from  $V_{k\alpha} (x)$ and $U_{k\alpha} (x) .$

\begin{itemize}
\item $ x = 1 , \quad \alpha \in \mathbb{N} , \quad \beta = 0 , \quad \nu = 0 \, :$
\begin{align}
& V_{k\alpha} (1) :  \quad - \varepsilon , \, \, - 1 + 2 \varepsilon , \, \,  5 - 4 \varepsilon , \, \, - 18 + 13 \varepsilon , \, \, 63 - 58 \varepsilon , \, \, \, ...  \qquad  \qquad k \in \mathbb{N}_0 \,, \quad \varepsilon = \pm 1 \,, \nonumber \\
& U_{k\alpha} (1) : \quad 1 - \varepsilon , \, \, - 2 + 3 \varepsilon , \, \,  7 - 8 \varepsilon , \, \,  1 - 3 \varepsilon , \, \, 106 - 87 \varepsilon , \, \, \, ... \qquad \, \qquad k \in \mathbb{N} \,, \quad \varepsilon = \pm 1 \, \nonumber .
\end{align}
\item  $ x = 1 , \quad \alpha \in \mathbb{N} , \quad \beta = 0 , \quad \nu = 1 \, :$
\begin{align}
& V_{k\alpha} (1) :   \quad - \varepsilon , \, \, - 1 + 2 \varepsilon , \, \,  5 - 4 \varepsilon , \, \, - 18 + 13 \varepsilon , \, \,   63 - 58 \varepsilon , \, \, \, ... \qquad  \qquad k \in \mathbb{N}_0 \,, \quad \varepsilon = \pm 1 \,, \nonumber \\
& U_{k\alpha} (1) :  \quad  1 - \varepsilon , \, \, - 2 + 3 \varepsilon , \, \,  7 - 8 \varepsilon , \, \, - 14 + 3 \varepsilon , \, \, - 2 , \, \, \, ... \qquad  \qquad \, \,\, \quad k \in \mathbb{N} \,, \quad \varepsilon = \pm 1 \, \nonumber .
\end{align}
\end{itemize}

When $x = \pm 1, \,\, \varepsilon = \alpha = 1, \,\, \beta = \nu = 0$, then \eqref{3.19} becomes
\begin{equation}
 \sum_{n=0}^{\infty} n! \, [ n^k  + u_k ]  = v_k  \quad \text{if} \, \, \, x =1 ,  \qquad \qquad  \sum_{n=0}^{\infty} (-1)^n n! \, [ (-1)^{k+1} n^k  + \bar{u}_k ]  = \bar{v}_k  \quad \text{if} \, \, \, x = - 1 ,  \label{3.26}
\end{equation}
where $u_k = U_{k1} (1), \, \, v_k = V_{(k-1)1} (1)$ and $\bar{u}_k = -U_{k1} (-1), \, \, \bar{v}_k = - V_{(k-1)1} (-1)$ are some integers. First equality in \eqref{3.26} was introduced in \cite{bd5}, and properties of $u_k$ and $v_k$ are investigated in series of papers by Dragovich (see references \cite{bd8,bd9,bd10,bd11}). In \cite{murty} some relationships  of $u_k$ with the Stirling  numbers of the second kind are established, and $p$-adic irrationality of $\sum_{n \geq 0} n! n^k$ was discussed (see \cite{subedi1,subedi2,alexander}). Note that the following sequences are related to some real (combinatorial) cases, compare with \cite{sloane}:
\begin{align}
&v_k =  -A_{(k-1)1} (0;1) = V_{(k-1)1} (1) : \, \, -1, 1, 1, -5, 5, 21, - 105, 141,  ...        \quad  \text{see  \, A014619}  \label{3.27}\\
&u_k = A_{(k-1)1} (1;1) - A_{(k-1)1}(0;1) = U_{k1} (1) : \, \, 0, 1, -1, -2, 9, -9, -50, 267, ...  \quad  \text{see  \, A000587} \label{3.28} \\
&\bar{u}_k = A_{(k-1)1} (1;-1) + A_{(k-1)1}(0;-1) = -U_{(k1} (-1): \, \, 2, 5, 15, 52, 203, 877, 4140, 21147, ... \, \, \,  \text{see \,   A000110} \label{3.29} \\
&\bar{v}_k = A_{(k-1)1} (0;-1) = -V_{(k-1)1} (-1) : \, \, 1, 3, 9, 31, 121, 523, 2469, 12611, ...   \quad  \text{see  \, A040027}   \label{3.30} .
\end{align}

It is worth pointing out  integer series \eqref{3.29} and \eqref{3.30}, which are directly calculated from the following recurrence relations:
\begin{align}
& \bar{u}_{k+1} = \sum_{\ell =1}^{k} \left(\begin{aligned}k&+1\\ &\ell\end{aligned}\right) \, (-1)^{k-\ell} \, \bar{u}_{\ell} + \bar{u}_{k} + (-1)^k  \,,
 \qquad   \bar{u}_{1} = 2 \,,  \quad  k = 1, 2, 3, ...  \,,     \label{3.31}    \\
& \bar{v}_{k+1} =  \sum_{\ell =1}^{k} \left(\begin{aligned}k&+1\\ &\ell\end{aligned}\right) \, (-1)^{k-\ell} \, \bar{v}_{\ell} + \bar{v}_{k} \,, \qquad \bar{v}_{1}  = 1 \,, \quad  k = 1, 2, 3, ... \, \, .  \label{3.32}
\end{align}
In particular, the series of integers $\bar{u}_k \,, \, \, (k = 1, 2, 3, ...) \, $ coincides with the Bell numbers $B_k \,, \, \, (k = 0, 1, 2, ...) \, $ by equality
$B_{k+1} = \bar{u}_{k}$ for $k \geq 1$ (at least for the first 8 terms directly calculated). Recall that the Bell numbers $B_k$ are equal to the number of partitions of a set of $k$ elements. They satisfy the recurrence relation   $$B_{k+1} = \sum_{\ell =0}^{k} \left(\begin{aligned}&k\\ &\ell\end{aligned}\right)  \, B_{\ell} \,, \qquad B_0 = 1 .$$   It follows that the number of partitions of sets with  more than one element can be obtained also from the recurrence relation for $\bar{u}_{k}$ given by \eqref{3.31}.

Various  aspects of the  polynomials $A_{k\alpha} (n; x) \,, \, \,  (k = 0, 1, 2, ... \,, \, \, \alpha = 1, 2, 3, ...) \, $ deserve to be further  analyzed.

\section*{Acknowledgments}
Work on this paper was partially supported by Ministry of Education, Science and Technological Development of the Republic of Serbia, projects: OI 174012, TR 32040 and TR 35023. A part of this work was done during B.D. visit  of the
International Center for Mathematical Modeling
in Physics, Engineering, Economics, and Cognitive Science,
Linnaeus University, V\"axj\"o, Sweden.


\end{document}